\newtheorem{theorem}{Theorem}[section]
\newtheorem{thm}{Theorem}[section]
\newtheorem{lem}[thm]{Lemma}
\newtheorem{question}[thm]{Question}
\theoremstyle{definition}
\theoremstyle{remark}
\numberwithin{equation}{section}
\newcommand{\Real}{{\mathbb R}}
\newcommand{\Rat}{{\mathbb Q }}
\newcommand{\Zed}{{\mathbb Z}}
\newcommand{\Diff}{{\mathrm{Diff}}}
\newcommand{\lar}{{\Lambda}}
\newcommand{\fof}{{\mathbb Q(\Lambda)}}
\begin{document}

\title{An obstruction to a knot being deform-spun via Alexander polynomials}


\author{Ryan Budney }
\address{Mathematics and Statistics, University of Victoria \\
PO BOX 3045 STN CSC, Victoria, B.C., Canada V8W 3P4}
\email{rybu@uvic.ca,}
\thanks{Both authors would like to thank the Max Planck Institute for Mathematics for
its hospitality. The first author would also like to thank the 
Institut des Hautes \'Etudes Scientifiques for its hospitality, as well as
Danny Ruberman and an anonymous referee for many useful comments on the paper.}
\author{Alexandra Mozgova}
\address{ACRI, 260 route du Pin Montard, BP 234 \\
F-06904 sophia-Antipolis Cedex - France}
\email{sasha.mozgova@gmail.com}

\subjclass[2000]{Primary 57R40}

\date{December 9th, 2007}

\dedicatory{}

\commby{}

\begin{abstract}
We show that if a co-dimension two knot is deform-spun from a lower-dimensional co-dimension 2 knot, there are constraints on the Alexander polynomials.  In particular this shows, for all $n$, that not 
all co-dimension 2 knots in $S^n$ are deform-spun from knots in $S^{n-1}$.   
\end{abstract}

\maketitle


In co-dimension 2 knot theory \cite{Kaw}, typically the term  `$n$-knot' denotes a manifold pair
$(S^{n+2},K)$ where $K$ is the image of a smooth embedding $f : S^n \to S^{n+2}$. 
An $n$-ball pair is a pair $(D^{n+2},J)$ where $J$ is the image of a smooth
embedding $f : D^n \to D^{n+2}$ such that $f^{-1}(\partial D^{n+2})=\partial D^n$.
Every $n$-knot $K$ is isotopic to a union 
$(S^{n+2},K) = (D^{n+2},J) \cup_{\partial} (D^{n+2},D^n)$ for some unique isotopy
class of $n$-ball pair $(D^{n+2},J)$ provided we consider $K$ to be oriented. 
Let $\Diff(D^{n+2},J)$ denote the group of diffeomorphisms of an $n$-ball pair $(D^{n+2},J)$. 
That is, $f \in \Diff(D^{n+2},J)$ means that $f$ is a diffeomorphism of $D^{n+2}$ which
restricts to the identity on $\partial D^{n+2} = S^{n+1}$, is isotopic to the identity (rel boundary)
as a diffeomorphism of $D^{n+1}$, and $f$ preserves $J$, $f(J)=J$.  We say an $n$-knot 
$(S^{n+2},K)$ is deform-spun from an $(n-1)$-knot 
$(S^{n+1},K') = (D^{n+1},J') \cup_\partial (D^{n+1},D^{n-1})$ if there exists
$g \in \Diff(D^{n+1},J')$ such that the pair 
$\left( (D^{n+1},J') \times_g S^1 \right) \cup_\partial \left( (S^{n},S^{n-1}) \times D^2 \right)$
 is diffeomorphic to the pair $(S^{n+2},K)$.  Here $(D^{n+1},J') \times_g S^1$ is the bundle over
$S^1$ with fibre $(D^{n+1},J')$ and monodromy given by $g$, ie: $(D^{n+1},J') \times_g S^1 = ((D^{n+1},J') \times \Real) / \Zed$ where $\Zed$ acts diagonally, by $g$ on $(D^{n+1},J')$ and as the group of universal covering transformations for $\Real \to S^1$. 

\begin{figure}[ht]\label{spin}
\psfrag{t1}[tl][tl][1][0]{$\{t\}\times int(D^{n+1})$}
\psfrag{t2}[tl][tl][1][0]{$S^n$}
$$\includegraphics[width=9cm]{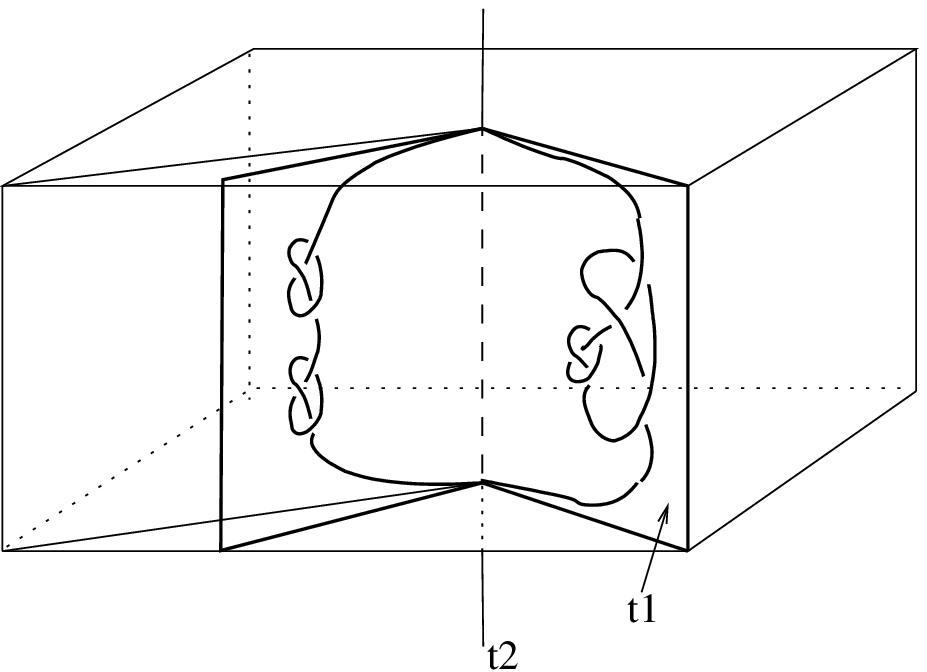}$$
\centerline{\small Figure 1}
\end{figure}

To picture a deform-spun knot, let $g_t$ be a null-isotopy of $g$, ie:
$g_0 = g$, $g_1 = Id_{D^{n+1}}$ and $g_t$ is a diffeomorphism
of $D^{n+1}$ which restricts to the identity on $\partial D^{n+2}$ for all
$0 \leq t \leq 1$. Consider $S^{n+2}$ to be the union of a great
$n$-sphere $S^n$ and a disjoint trivial vector bundle over $S^1$.  Identify
this trivial vector bundle over $S^1$ with $S^1 \times int(D^{n+1})$, and
identify $S^1$ with $\Real/\Zed$. We assume that the inclusion
$S^1 \times int(D^{n+1}) \to S^{n+2}$ extends to a map 
$S^1 \times D^{n+1} \to S^{n+2}$ such that
the restriction $S^1 \times S^n \to S^{n+2}$ factors as projection onto
the great sphere $S^n$ followed by inclusion $S^n \to S^{n+2}$.  
Then the set $\{ (t,x) \in S^1 \times int(D^{n+1}) :
 x = g_t(p), p \in int(J') \}$ is a subset of $S^{n+2}$ whose closure is an
$n$-knot. This is the deform-spun knot, see Figure 1.

The main observation of this paper is that if $K$ is an $n$-knot, 
deform-spun from an $(n-1)$-knot $K'$, then there is a relationship between
the Alexander modules of $K$ and $K'$ which give rise to 
constraints on the Alexander polynomials $\Delta_1, \cdots, \Delta_n$ of $K$.  

\begin{thm}\label{mainthm}Let $K$ be a $n$-knot which is deform-spun,
then there exist polynomials $q_i \in \lar = \Rat[t^{\pm 1}] = \Rat[\Zed]$ 
for $i=0,1,\cdots,n$ which satisfy $q_{i+1}q_i = \Delta_{i+1}$ ($q_0 = q_n = 1$) and 
$q_{n-i} = \overline{q_i}$
for all $i$, where we use the convention $\overline{q_i}(t)=q_i(t^{-1})$.  
\end{thm}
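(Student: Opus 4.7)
The plan is to exploit the fibered structure deform-spinning imposes on the knot exterior. Let $Y = D^{n+1}\setminus \nu(J')$ denote the exterior of the ball-pair $(D^{n+1}, J')$. Restricting the deform-spin construction to exteriors, one obtains a decomposition $X_K = E \cup A$, where $E = Y\times_g S^1$ is the mapping torus of $g|_Y$, and $A$ is the exterior of the ``axis'' portion of $K$ in its $D^2$-tubular neighborhood. Homotopically $A\simeq S^1$ (generated by the meridian of the axis, which is a meridian of $K$), and the $S^1$-factor of the mapping torus $E$ is null-homologous in $X_K$ because it bounds a meridional $D^2$ inside $A$. Consequently, on the infinite cyclic cover $\tilde X_K\to X_K$, the preimage $\tilde E$ is itself the mapping torus of a lifted monodromy $\tilde g : \tilde Y \to \tilde Y$, fibered over $S^1$; here $\tilde Y$ is the infinite cyclic cover of $Y$, and $H_i(\tilde Y;\Rat)\cong A_i(K')$ as $\lar$-modules (since $Y$ and the full $(n-1)$-knot exterior differ only by a standard piece contributing nothing above $H_0$).

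The next step is the Wang sequence for the fibration $\tilde E \to S^1$,
\[ \cdots \to A_i(K') \xrightarrow{\tilde g_* - 1} A_i(K') \to H_i(\tilde E) \to A_{i-1}(K') \to \cdots. \]
Because $\tilde g_*$ is $\lar$-linear and each $A_i(K')$ is $\lar$-torsion, $\ker(\tilde g_* - 1)$ and $\mathrm{coker}(\tilde g_* - 1)$ have the same order. Denote this common order by $q_i\in\lar$. Each short exact sequence extracted from the Wang sequence then gives $\mathrm{ord}\,H_i(\tilde E) \doteq q_i q_{i-1}$. A Mayer--Vietoris computation for $X_K = E \cup A$ converts this into the statement $\Delta_{i+1}(K) \doteq q_{i+1}q_i$. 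The axis piece $A$, whose cover $\tilde A$ is contractible and whose overlap with $\tilde E$ contributes only $\lar/(t-1)$ in degrees $0$ and $1$, is precisely what produces the endpoint normalizations $q_0 = q_n = 1$.

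Finally, the symmetry $q_{n-i}=\overline{q_i}$ comes from Blanchfield duality for the $(n-1)$-knot $K'$: this yields a non-singular sesquilinear pairing $A_i(K')\times A_{n-i}(K')\to \fof/\lar$, identifying $A_{n-i}(K')$ with the conjugate $\lar$-dual of $A_i(K')$. Because $g$ is a self-diffeomorphism of the ball pair $(D^{n+1}, J')$, the induced $\tilde g_*$ commutes with this duality, so $\ker(\tilde g_*-1)$ on $A_{n-i}(K')$ is the conjugate dual of $\ker(\tilde g_*-1)$ on $A_i(K')$. Since orders of dual $\lar$-torsion modules differ by conjugation, one obtains $q_{n-i}\doteq \overline{q_i}$.

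The step I expect to be most delicate is the endpoint bookkeeping in Mayer--Vietoris, verifying that the axis $A$ modifies $H_*(\tilde E)$ exactly enough to produce $q_0 = q_n = 1$ without introducing any further factors, together with tracking the lifted monodromy $\tilde g$ (which is determined only up to composition with a deck transformation) carefully enough that the Blanchfield pairing is truly preserved by its action.
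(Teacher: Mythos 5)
Your argument is essentially the paper's: the same decomposition of the knot exterior into a mapping torus plus an axis piece, the same Wang--sequence identification of the $q_i$ as order ideals of $\ker(\tilde g_*-1)=\mathrm{coker}(\tilde g_*-1)$ with $\Delta_{i+1}=q_{i+1}q_i$, and the same duality for the symmetry (your Blanchfield pairing is exactly the composite of the Poincar\'e duality and universal--coefficient isomorphisms the paper writes out as a diagram). One small imprecision worth noting: under the duality, $\ker(\tilde g_*-1)$ on $A_{n-i}(K')$ is the conjugate $\lar$-dual of $\mathrm{coker}(\tilde g_*-1)$ on $A_i(K')$, not of the kernel, and these kernels/cokernels need not be isomorphic as $\lar$-modules (see the paper's Remark after Lemma 1.1); but since they do have the same order ideal --- which is precisely what the paper's Lemma 1.1(d) packages --- your conclusion $q_{n-i}=\overline{q_i}$ is unaffected.
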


An elementary consequence of this theorem is that for each $n \geq 2$, not every $n$-knot 
is deform-spun from an $(n-1)$-knot.  This follows from the work of Levine \cite{levine} who gave a characterization of the Alexander modules of co-dimension $2$ knots.  In particular Levine 
shows that an $n$
knot has Alexander polynomials $\Delta_1, \cdots, \Delta_n \in \lar$ which satisfy the relations
$\Delta_i(1) \neq 0$, $\overline{\Delta_i} = \Delta_{n-i}$ for all $i$. Moreover, these relations
are complete in the sense that given any $n$ polynomials which satisfy these relations, there
is an $n$-knot which has the specified Alexander polynomials.  
The case $n=2$ has a particularly simple example.  Theorem \ref{mainthm} states that if 
$K$ is deform-spun, then $\overline{\Delta_1}=\Delta_1$, yet there are $2$-knots such 
that $\Delta_1$ is not symmetric. See example 10 of Fox's Quick Trip \cite{fox}, which describes a $2$-knot such that $\Delta_1(t) = 2t-1$.

Litherland's deform-spinning construction has its origin in papers of Fox and
Zeeman.  Fox's `Rolling' \cite{roll} paper gave a heuristic outline of the notion 
eventually called deform-spinning, as a graphing process from a `relative 2-dimensional
braid group' which nowadays is frequently called the fundamental group of the space of knots, 
or (in a slightly different setting) the mapping class group of the knot complement 
\cite{BudFam}.  Zeeman proved that the complements of co-dimension two $n$-twist-spun
knots fibre over $S^1$ provided $n \neq 0$ \cite{Zeeman}.  
Litherland \cite{Lith} went on to formulate a general situation where deform-spun knot 
complements fibre over $S^1$.  Specifically, Litherland proved that if the diffeomorphism 
$g : (D^{n+1},J') \to (D^{n+1},J')$
preserves a Seifert surface for the knot $(S^{n+1},K')$ corresponding to the $(n-1)$-disc pair
$(D^{n+1},J')$, then the deform-spun knot associated
to the diffeomorphism $M \circ g : (D^{n+1},J') \to (D^{n+1},J')$ has a complement which 
fibres over $S^1$, provided $M : (D^{n+1},J') \to (D^{n+1},J')$ is a non-zero power of the meridional Dehn twist about $J'$.

This paper was largely motivated by a result in `high' co-dimension knot theory. 
In the paper \cite{BudFam} the first author gave a new proof of Haefliger's theorem, 
that the monoid of isotopy classes of smooth embeddings of $S^j$ in $S^n$ is a group, provided
$n-j>2$.  The heart of the proof is showing that if $n-j>2$ then every knot $(S^n,K)$ (where
$K \simeq S^j$) is deform-spun from a lower-dimensional knot $(S^{n-1},K')$, where $K' \simeq S^{j-1}$.  Moreover, all knots $(S^n,K)$ are $i$-fold deform-spun for $i = 2(n-j)-4$, in the sense that one 
obtains $(S^n,K)$ be iterating the deform-spinning process $i$ times. So in a sense this paper represents an investigation of the extreme case $n-j=2$.  A second motivation is
the observation that frequently the groups $\pi_0 \Diff(D^3,J')$ ($(D^3,J')$ a $1$-ball
pair) are quite large \cite{BudFam}, in the sense that their classifying spaces all have the
homotopy-type of finite-dimensional manifolds, but the dimension of these manifolds can be 
arbitrarily large. So there are many ways to construct $2$-knots by deform-spinning a $1$-knot. As far as the authors know, this paper represents the first known
obstructions to knots being deform-spun.

\section{Asymmetry obstruction}

Given a co-dimension $2$ knot $K$ in $S^{n+2}$, the complement of the knot, 
$C_K$ is a homology $S^1$. Let $\tilde C_K$ denote the universal abelian
cover of $C_K$, ie: the cover corresponding to the kernel of the abelianization map
$\pi_1 C_K \to \Zed$, and consider $H_i (\tilde C_K;\Rat)$ to be a module over
the group-ring of covering transformations $\lar = \Rat[\Zed] = \Rat[t,t^{-1}]$, this
is called the $i$-th Alexander module of $K$.   
$H_i (\tilde C_K;\Rat)$ is a finitely-generated torsion $\lar$-module \cite{levine} for
each $i$, so $H_i (\tilde C_K;\Rat) \simeq \bigoplus_j \lar/p_j$ for some collection 
of polynomials $p_j$.  The product of these polynomials $\prod_j p_j$ is called the 
$i$-th Alexander polynomial of $K$, or the order ideal of the $i$-th Alexander module
 $H_i(\tilde C_K;\Rat)$, denoted $\Delta_i$. In general, the order ideal of a finitely
generated torsion $\lar$-module $M$ will be denoted $\Delta_M$.   
A theorem of Levine's \cite{levine} is
that Poincar\'e Duality combined with the Universal Coefficient Theorem induces an isomorphism 
$\overline{H_i(\tilde C_K;\Rat)} \simeq Ext_\lar(H_{n+1-i} (\tilde C_K;\Rat), \lar)$. 
Here, if $M$ is a $\lar$-module, $\overline{M}$ denotes the conjugate $\lar$-module. 
This is a module whose underlying $\Rat$-vector space is $M$, but where action of the 
generator $t$ on $\overline{M}$ is defined as the action of $t^{-1}$ on $M$. 
Thus, the only Alexander polynomials
of $K$ which can be non-trivial are $\Delta_1, \cdots, \Delta_n$, and they satisfy the
relation $\overline{\Delta_i} = \Delta_{n+1-i}$ for all $i$.

We collect some elementary results about $\lar$-modules that will be of use in the proof
of Theorem \ref{mainthm}.  To state the lemma, let $\fof$ denote the field of fractions of
$\lar$, ie: the field which consists of rational Laurent polynomials. 

\begin{lem}\label{lem1}
\begin{itemize}
\item[(a)] (see \cite{Kaw} 7.2.7) Given a short exact sequence 
of finitely generated torsion $\lar$-modules $$ 0 \to H_1 \to H \to H_2 \to 0 $$
the order ideals satisfy $\Delta_{H_1} \Delta_{H_2} = \Delta_H$.
\item[(b)] (see \cite{levine} Proposition 4.1) Let $H$ be a finitely-generated torsion $\lar$-module. 
There is a natural isomorphism of $\lar$-modules
$$ Ext_{\lar}(H,\lar) \simeq Hom_{\lar}(H,\fof/\lar).$$
\item[(c)] With the same setup as (b), there is a natural
isomorphism of $\Rat$-vector spaces
$$Hom_{\lar}(H,\fof/\lar) \simeq Hom_{\Rat}(H,\Rat)$$
where we interpret $\lar \subset \fof$ as the rational Laurent polynomials
with denominator $1$. 
\item[(d)] Let $g : H \to H$ be a $\lar$-linear map, where $H$ is a finitely-generated
torsion $\lar$-module. 
Let $g^* : Ext_{\lar}(H,\lar) \to Ext_{\lar}(H,\lar)$ the Ext-dual of $g$. Then $ker(g)$ and $ker(g^*)$ have the same order ideals.
\end{itemize}
\end{lem}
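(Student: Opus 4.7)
Parts (a) and (b) come with references in the statement, so I would only remark that (a) is the standard multiplicativity of order ideals for finitely generated torsion modules over the PID $\lar = \Rat[t^{\pm 1}]$, and (b) drops out of the long exact Ext-sequence attached to $0 \to \lar \to \fof \to \fof/\lar \to 0$: since $H$ is torsion and $\fof$ is the injective hull of $\lar$, both $Hom_\lar(H,\fof)$ and $Ext_\lar(H,\fof)$ vanish, leaving the asserted connecting isomorphism.

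For (c), my plan is to reduce to the cyclic case via the structure theorem $H \simeq \bigoplus_i \lar/(p_i)$. A direct computation identifies $Hom_\lar(\lar/(p),\fof/\lar)$ with $\{x \in \fof/\lar : px = 0\}$, which a partial-fractions argument shows is a $\Rat$-vector space of dimension $\deg p$, matching $\dim_\Rat Hom_\Rat(\lar/(p),\Rat) = \deg p$. The natural $\Rat$-linear isomorphism is obtained by post-composition with a fixed $\Rat$-linear trace functional $\tau : \fof/\lar \to \Rat$ (for instance a sum of residues at the finite poles in an algebraic closure), chosen so that $\phi \mapsto \tau\circ\phi$ pairs non-degenerately with each cyclic summand; the global isomorphism then follows by a dimension count.

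For (d), the main part, I would first use the $\lar$-naturality of (b) to identify $g^*$ on $Ext_\lar(H,\lar)$ with the pre-composition operator $\tilde g : \phi \mapsto \phi\circ g$ on $Hom_\lar(H,\fof/\lar)$. The kernel of $\tilde g$ is exactly the set of $\phi$ vanishing on $\mathrm{Im}\,g$, so
$$\ker(\tilde g) \;=\; Hom_\lar(H/\mathrm{Im}\,g,\,\fof/\lar) \;\simeq\; Ext_\lar(H/\mathrm{Im}\,g,\,\lar)$$
by a second application of (b). A free-resolution computation gives $Ext_\lar(\lar/(p),\lar)\simeq\lar/(p)$, so, combined with (a) and the structure theorem, $\Delta_{Ext_\lar(M,\lar)} = \Delta_M$ for every finitely generated torsion $M$. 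Hence $\Delta_{\ker g^*} = \Delta_{H/\mathrm{Im}\,g}$.

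Finally, applying (a) to the two short exact sequences
$$0 \to \ker g \to H \to \mathrm{Im}\,g \to 0, \qquad 0 \to \mathrm{Im}\,g \to H \to H/\mathrm{Im}\,g \to 0$$
gives $\Delta_H = \Delta_{\ker g}\,\Delta_{\mathrm{Im}\,g} = \Delta_{\mathrm{Im}\,g}\,\Delta_{H/\mathrm{Im}\,g}$, and therefore $\Delta_{\ker g} = \Delta_{H/\mathrm{Im}\,g} = \Delta_{\ker g^*}$, as required. The step requiring the most care is the translation of $g^*$ through (b) — that is, checking that the connecting isomorphism of (b) is natural in $H$ under $\lar$-linear self-maps, so that $g^*$ really does become $\phi \mapsto \phi\circ g$ — but this is immediate from the naturality of the Ext long exact sequence in its contravariant variable.
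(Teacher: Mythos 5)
Your arguments for (a), (b), (c) are sound and close in spirit to the paper's: for (c) the paper also produces an explicit trace functional $\fof/\lar \to \Rat$ (it sends $p/q$ to the constant coefficient of the division-algorithm remainder $r$ in $p = sq + r$, normalizing $q$ to have constant term $1$), checks bijectivity on each cyclic summand $\lar/(p)$ by a dimension count, and extends by additivity. Your residue-sum functional plays exactly the same role; the only difference is the choice of explicit trace.

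Your proof of (d) is correct but takes a genuinely different route from the paper's. The paper works through the primary decomposition: it writes $H = \bigoplus_{p} H_p$, notes $g$ restricts to each $H_p$, uses (c) to identify $g^*$ with the $\Rat$-linear dual of $g$, concludes that $\ker g_p$ and $\ker g_p^*$ have the same $\Rat$-dimension, and then observes that within a fixed $p$-primary piece the order ideal $p^{d_p}$ is determined by $\Rat$-dimension via $\dim_\Rat \ker g_p = d_p \deg p$. Your argument instead stays entirely at the level of order-ideal calculus: you translate $g^*$ to precomposition on $\Hom_\lar(H, \fof/\lar)$ via the naturality of (b), identify $\ker g^* \simeq Ext_\lar(H/\mathrm{Im}\,g, \lar)$, show $Ext_\lar(-,\lar)$ preserves order ideals by a cyclic computation, and then apply (a) to the two exact sequences built from $g$ to get $\Delta_{\ker g} = \Delta_{H/\mathrm{Im}\,g}$. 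The paper's route relies on (c) (as the bridge to $\Rat$-dimensions) and on the primary decomposition; yours bypasses both and uses only (a), (b), and a short free-resolution calculation. Your version is arguably cleaner and closer to standard homological-algebra reasoning; the paper's is more hands-on linear algebra. Both are complete.
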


\begin{proof}(of item (c))
Consider a rational polynomial $\frac{p}{q} \in \fof$. The division 
algorithm allows us to write $p = sq + r$ for Laurent polynomials $s, r \in \lar$
where $r \in \Rat[t]$ and $deg(r) < deg(q)$.  To ensure that $r$ is unique, 
we demand that $GCD(p,q)=1$, $q \in \Rat[t]$ and the constant coefficient of
$q$ is $1$.  Define a function $\fof/\lar \to \Rat$ 
by sending $\frac{p}{q}$ to the constant coefficient of $r$. Composition with this
map is a $\Rat$-linear homomorphism $Hom_{\lar}(H,\fof/\lar) \to Hom_\Rat(H,\Rat)$
which is natural and respects connect-sum decompositions of the domain $H$. 
Thus to verify that it is an isomorphism, we need to only check it on a torsion 
$\lar$-module with one generator.
$$Hom_{\lar}(\lar/p,\fof/\lar) \to Hom_\Rat(\lar/p,\Rat)$$
In this case the target space has dimension $deg(p)$; the basis given by the dual basis
to the polynomials $t^i$ for $0 \leq i < deg(p)$. The domain also has dimension $deg(p)$, 
with basis given by homomorphisms that send $1$ to $t^i/p$ where 
$0 \leq i < deg(p)$. Hence the map is a bijection between these basis vectors. 

To prove item (d), consider the `prime factorization' of $H$. 
Let $P \subset \lar$ be the prime factors of the order ideal $\Delta_H$. 
Given $p \in P$ let $H_p \subset H$ be the sub-module
of elements of $H$ killed by a power of $p$, thus $\bigoplus_{p \in P} H_p \simeq H$.
$g$ must respect the splitting, so we have maps $g_p$ such that:  
$$g = \bigoplus_{p \in P} g_p : H_p \to H_p.$$
Thus, 
$$\Delta_{ker(g)} = \prod_{p \in P} \Delta_{ker(g_p)}.$$
Let $d_p \in \Zed$ be defined so that 
$\Delta_{ker(g_p)} = p^{d_p}$.
By part (c), $g$ and $g^*$ can be thought of as the
$Hom_\Rat(\cdot,\Rat)$-duals of each other, thus $ker(g)$ and $ker(g^*)$ have
the same dimension as $\Rat$-vector spaces, and so 
$dim_{\Rat}(ker(g_p)) = deg(p)d_p$, and
$\Delta_{ker(g_p)}$ is determined by the rank of
$ker(g_p)$ as a $\Rat$-vector space.  Hence $ker(g)$ and $ker(g^*)$ have the same 
order ideals.
\end{proof}

{\it Remark.} Although they have the same order ideals, 
in general the two kernels are not isomorphic as $\lar$-modules.
An example is given by 
$g : \lar/p \oplus \lar/p^2 \to \lar/p \oplus \lar/p^2$
defined by $g(a,b) = (0,pa)$. In this case, 
$ker(g) \simeq \lar/p^2$, while
$ker(g^*) \simeq \bigoplus_2 \lar/p$.

\begin{proof}({\it of Theorem \ref{mainthm}})
Let $C_K$ be the complement of an open tubular neighbourhood of $K \subset S^{n+2}$, and 
$C_{K'}$ the complement of an open tubular neighbourhood of $K' \subset S^{n+1}$.
As in the introduction, let $g : (D^{n+1}, J') \to (D^{n+1},J')$ be the 
diffeomorphism for the deform-spinning construction of $K$ from $K'$, so
we can isotope $g$ so that it preserves a regular neighbourhood of $J' \cup S^n$, 
therefore $g$ restricts to a diffeomorphism of $C_{K'}$ (which we can think of as the
complement of an open regular neighbourhood of $S^n \cup J'$ in $D^{n+1}$), giving
a diffeomorphism
$$C_K \simeq (C_{K'} \times_g S^1) \cup_{\nu S^1 \times S^1} ((\nu S^1) \times D^2).$$
where $\nu S^1$ is a trivial $D^{n-1}$-bundle over $S^1$ (a meridian of $\partial C_{K'}$).
The decomposition lifts to the universal abelian covering space, giving the
isomorphism $H_1(\tilde C_K;\Rat) \simeq coker(I-g_{1*})$ and short exact sequences
$$ 0 \to coker(g_{i*} - I) \to H_i(\tilde C_K;\Rat) \to ker(g_{(i-1)*}-I) \to 0, \ \ i > 1$$
with $g_{i*} : H_i (\tilde C_{K'};\Rat) \to H_i(\tilde C_{K'};\Rat)$ the induced map
coming from $\tilde g : \tilde C_{K'} \to \tilde C_{K'}$.  
Let $q_i$ be the order ideal of $coker(g_{i*}-I)$.

The map $g_{i*} - I : H_i(\tilde C_{K'};\Rat) \to H_i(\tilde C_{K'};\Rat)$ give
rise to a canonical short exact sequence
$$ 0 \to ker(g_{i*}-I) \to H_i(\tilde C_{K'};\Rat) \to img(g_{i*}-I) \to 0$$
and the inclusion $img(g_{i*}-I) \to H_i(\tilde C_{K'};\Rat)$ to another
$$ 0 \to img(g_{i*}-I) \to H_i(\tilde C_{K'};\Rat) \to coker(g_{i*}-I) \to 0.$$
Lemma \ref{lem1} (a) applied to our short exact sequences tells us
that $\Delta_i = q_i q_{i-1}$. 

We now reconsider the proof of the symmetry of the Alexander polynomial of a knot in $S^3$
\cite{GordSurv, Kaw}, or more precisely, the isomorphism 
$\overline{H_i(\tilde C_{K'};\Rat)} \simeq H_{n-i}(\tilde C_{K'};\Rat)$
derived from Poincar\'e Duality \cite{levine},
paying special attention to naturality with respect to diffeomorphisms $g \in \Diff(C_{K'})$, 
with an eye towards proving the symmetry conditions $\overline{q_{n-i}}=q_i$.
\begin{enumerate}
\item $H_i (\tilde C_{K'};\Rat) \simeq H_i(\tilde C_{K'},\partial; \Rat)$: this is a natural isomorphism coming from the long exact sequence of a pair.
\item $H_i(\tilde C_{K'},\partial;\Rat) \simeq \overline{H^{n+1-i}(\tilde C_{K'};\Rat)}$: this is the Poincar\'e duality isomorphism; it is also natural, although it reverses arrows \cite{levine}. 
\item $H^{n+1-i}(\tilde C_{K'};\Rat) \simeq Ext_\lar(H_{n-i}(\tilde C_{K'};\Rat),\lar)$: this is 
a natural isomorphism coming from the universal coefficient theorem \cite{levine}.
\item $Ext_\lar(H_{n-i}(\tilde C_{K'};\Rat),\lar) \simeq H_{n-i}(\tilde C_{K'};\Rat)$. This last
result uses that both modules have a square presentation matrix, with one
being the transpose of the other.  Since $\lar$ is a principal ideal domain, 
the presentation matrices are equivalent to the same diagonal matrices. 
This isomorphism is not natural.
\end{enumerate}
Thus we have a non-natural isomorphism $H_i(\tilde C_K;\Rat) \simeq \overline{H_{n-i}(\tilde C_K;\Rat)}$. The natural part of the isomorphism can be expressed by the
commutative diagram

$$ \xymatrix{
\overline{H_i(\tilde C_K)} \ar[r] \ar[d]^{g_*} & \overline{H_i(\tilde C_K,\partial)}
\ar[r]^-{PD} \ar[d]^{g_*} & H^{n+1-i}(\tilde C_K) & Ext_{\lar}\left(H_{n-i}(\tilde C_K),\lar\right) \ar[l]_-{UCT} \\
\overline{H_i(\tilde C_K)} \ar[r] & \overline{H_i(\tilde C_K,\partial)} \ar[r]^-{PD} & 
 H^{n+1-i}(\tilde C_K) \ar[u]_{g^*} & Ext_{\lar}\left( H_{n-i}(\tilde C_K),\lar\right)
 \ar[u]_{(g_*)^*} \ar[l]_-{UCT} } $$

This gives us an isomorphism of $\lar$-modules 
$\overline{ker(I-g_{i*})} \simeq ker(I - (g_{(n-i)*}^{-1})^*)$, so
$$\overline{ker(I-g_{i*})} \simeq ker(I - (g_{(n-i)*}^{-1})^*) = ker(I - (g_{(n-i)*})^*).$$ 
Lemma \ref{lem1} (d), tells us that $ker(I-(g_{(n-i)*})^*)$ and $ker(I-g_{(n-i)*})$ have the same
order ideals.  Thus, $\overline{q_i} = q_{n-i}$. 
\end{proof}

\section{Comments and questions}

Levine \cite{levine} has a complete characterization of the Alexander modules of co-dimension
two knots. A natural question would be, could one derive further other obstructions to deform-spinning
from the Alexander modules of knots?  The primary aspect of Levine's work that we've neglected
is the $\Zed$-torsion submodule of $H_i(\tilde C_K;\Zed)$.  Simple experiments show that when
$K \subset S^{n+2}$ is deform-spun from a knot $K' \subset S^{n+1}$, the Alexander modules
of $K$ can have $\Zed$-torsion, even when the Alexander modules of $K'$ do not.  Moreover, 
twist-spinning sufficies to produce many such examples. So any torsion obstructions to 
deform-spinning, if they exist, would likely be fairly subtle. 

In co-dimension larger than two, deform-spinning is the boundary map in the pseudo-isotopy
long exact sequence for embedding spaces and diffeomorphism groups \cite{BudFam}.  Moreover, 
Cerf's Pseudoisotopy Theorem states that, in the case of diffeomorphism groups of discs, 
this map is onto, provided the dimension of the disc is $6$ or larger.  So one might
expect an analogy. 

\begin{question}
Is there a simple characterization of deform-spun co-dimension two knots $K \subset S^{n+2}$
(provided $n$ is large)? 
\end{question}

One would certainly expect more obstructions to deform-spinning than the ones in this
paper.  For example, let $K_1$ and $K_2$ be two otherwise unrelated $2$-knots such that 
$\Delta_{K_1}(t) = 2-t$ and $\Delta_{K_2}(t) = 2t-1$. Their connect sum has Alexander
polynomial $\Delta_{K_1 \# K_2}(t) = -2t^2 + 3t -2$ which is symmetric, but we have no 
reason to expect $K_1 \# K_2$ is deform-spun.

\bibliographystyle{amsplain}
\providecommand{\bysame}{\leavevmode\hbox to3em{\hrulefill}\thinspace}

\end{document}